\numberwithin{equation}{section}
\newcommand\op{\operatorname}
\newcommand\mvv\Bbb
\newcommand\QH{\operatorname{QH}^*}
\newcommand\tensor{\otimes}
\newcommand\Gr{\operatorname{Gr}}
\newcommand{\ovop}[1]{\overline{\operatorname{#1}}}
\newtheorem{theorem}{Theorem}[section]
\newtheorem{remark}[theorem]{ Remark}
\newtheorem{corollary}[theorem]{Corollary}
\newtheorem*{corollary*}{Corollary}
\newtheorem{proposition}[theorem]{Proposition}
\newtheorem*{proposition*}{Proposition}
\newtheorem{lemma}[theorem]{Lemma}
\newtheorem{definition/lemma}[theorem]{Definition/Lemma}
\newcommand{\sL}{\mathfrak{sl}}
\newtheorem{example}[theorem]{\bf Example}
\begin{document}
\title{On $\operatorname{S}_n$-invariant conformal blocks vector bundles of rank one on $\operatorname{\overline M_{0,n}}$}
\author{Anna Kazanova}
\address{Department of Mathematics, University of Georgia, Athens, GA, 30602}
\email{kazanova@math.uga.edu}

\begin{abstract}For any simple Lie algebra,  a positive integer, and tuple of compatible weights, the conformal blocks bundle is a globally generated vector bundle on the moduli space of pointed rational curves. We classify all $S_n$-invariant vector bundles of conformal blocks for $\sL_n$ which have rank one.  We show that the cone generated by their base point free first Chern classes is polyhedral, generated by level one divisors.
\end{abstract}
\maketitle

\section{Introduction}
To any  simple Lie algebra $\mathfrak{g}$,  positive integer $\ell$, and  $n$-tuple $\vec{\lambda}$, of dominant weights for $\mathfrak{g}$ at level $\ell$, there is a globally generated vector bundle $\mathbb{V}({\mathfrak{g},\vec{\lambda},\ell})$ of conformal blocks on the moduli space $\ovop{M}_{0,n}$, of stable $n$-pointed rational curves \cites{TUY, Fakh}.  Their first Chern classes, the conformal blocks divisors $\mathbb{D}({\mathfrak{g},\vec{\lambda},\ell})$, are base point free, and therefore lie in the cone of nef divisors.  

Understanding the nef divisors on a variety is central to understanding its birational geometry. Vector bundles of conformal blocks and their Chern classes have been studied, primarily with standard intersection-theoretic methods, using Fakhruddin's formulas for the Chern classes and their intersections with F-curves~\cites{AGSS,Fed11,Swi11,BG12,Fakh, GG12,    Fed 13, Gia11,   GJMS12,AGS}.   Examples of conformal blocks divisors can be computed using Swinarski's implementation of these formulas into Macaulay2 software, \cite{ConfBlocks}.  While recursive,  and dependent on the computation of ranks of the bundles, computations are limited to  divisors of relatively low level on $\ovop{M}_{0,n}$ for low $n$.  Many open questions about the divisors persist. 
 
In this paper we study the subcone of the nef cone generated by an infinite set of divisors  $\mathcal{S}$,  consisting of the first Chern classes of conformal blocks vector bundles of rank 1 for $\sL_n$ with $S_n$-invariant  weights.   This is a generalization of \cite{AGSS}, where the authors studied a set of $\lfloor n/2\rfloor$  $S_n$-invariant $\sL_n$ divisors of level one, which are all
first Chern classes of rank one bundles.   In that paper, it was shown that each level one divisor spanned an extremal ray of the $S_n$-invariant nef cone of $\ovop{M}_{0,n}$.
While our family consists of infinitely many divisors, we prove that they all are contained in the cone generated by the original divisors studied in~\cite{AGSS}.

This work can be seen as an illustration for how to study nontrivial families of conformal block divisors on  $\operatorname{\overline M_{0,n}}$ using  Schubert calculus and tools from~\cite{bgm}. 

\medskip

 For the finite dimensional simple Lie algebra $\sL_{n}$, the dominant integral weights $\lambda$ are parameterized by Young diagrams $\lambda=(\lambda^{(1)}\geq \lambda^{(2)}\geq\dots\geq\lambda^{(n-1)}\geq \lambda^{(n)}\geq 0)$ with $\lambda^{(1)}\leq \ell$. We use the notation $|\lambda|=\sum_{i=1}^n \lambda^{(i)}$. As is standard, we denote the fundamental dominant weights of the form $\lambda =(1,\dots, 1, 0, \dots, 0)$  with $|\lambda| =i $  by $\omega_i$.

\medskip

Our main theorem provides a complete description of $S_n$-invariant rank 1 vector bundles for $\sL_n$ on $\ovop{M}_{0,n}$.

\begin{theorem} \label{Main}  Let $\Lambda=\{(\ell-m)\omega_i+m\omega_{i+1}:   0\leq m \leq \ell,\  0 \le i \le n-1\}$. Then we have
\begin{enumerate}

\item $\operatorname{rk}\mathbb{V}(\sL_n, \lambda^n,\ell)=1$ if  $\lambda\in \Lambda$.
\item  $\operatorname{rk}\mathbb{V}(\sL_n, \lambda^n,\ell)>1$ if   $\lambda\not\in \Lambda$.
\end{enumerate}
\end{theorem}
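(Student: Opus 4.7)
The plan is to translate the rank computation into (quantum) Schubert calculus and analyze it combinatorially. By Witten's identification of the Verlinde algebra with a quotient of the quantum cohomology ring $\QH(\Gr(n,n+\ell))$ (together with the quantum-to-classical reductions from~\cite{bgm}), the integer $\op{rk}\mathbb{V}(\sL_n,\lambda^n,\ell)$ equals the coefficient of the trivial class in the $n$-fold fusion product $\sigma_\lambda^{\star n}$ for $\sL_n$ at level $\ell$. Under this dictionary, a level-$\ell$ weight $\lambda$ is a partition inside the $(n-1)\times\ell$ rectangle, and the weights in $\Lambda$ are precisely the near-rectangular shapes $(\ell^i,m)$ whose Young diagram has at most one removable corner strictly below the top full-width rows.

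For part (1), I first handle the rectangular boundary cases $m\in\{0,\ell\}$, where $\lambda=\ell\omega_i$ and $\sigma_\lambda$ is a rectangular Schubert class: an explicit quantum Pieri (or rim-hook) computation shows that the $n$-fold quantum self-product of a rectangle collapses to a single class, contributing $1$ to the trivial coefficient. For the intermediate cases $0<m<\ell$, I induct on $n$ via factorization at a boundary divisor $\delta_{0,2}\subset\ovop{M}_{0,n}$:
$$\op{rk}\,\mathbb{V}(\sL_n,\lambda^n,\ell)=\sum_\mu N_{\lambda,\lambda}^{\mu^*}\cdot\op{rk}\,\mathbb{V}(\sL_n,\mu,\lambda^{n-2},\ell).$$
For $\lambda\in\Lambda$, the support of the fusion $\sigma_\lambda\star\sigma_\lambda$ is itself controlled by a Pieri-type rule, and a careful bookkeeping shows that exactly one $\mu$ contributes nontrivially to the sum with multiplicity one, which closes the induction.

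For part (2), suppose $\lambda\notin\Lambda$. Then the Young diagram of $\lambda$ has at least two removable corners strictly below its top portion, and these produce at least two distinct intermediate weights $\mu_1\ne\mu_2$ in the support of $\sigma_\lambda\star\sigma_\lambda$ with $N_{\lambda,\lambda}^{\mu_i^*}>0$. A classical/quantum Schubert-calculus nonvanishing criterion, together with the inductive set-up above, shows that each branch contributes nontrivially to $\op{rk}\,\mathbb{V}(\sL_n,\mu_i,\lambda^{n-2},\ell)$, so the factorization formula yields $\op{rk}\ge 2$.

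The main obstacle is the ``$\ge 2$'' direction: one must verify that the two candidate fusion channels $\mu_1,\mu_2$ actually persist through every subsequent factorization step and do not collapse under quantum corrections. This is handled by invoking the quantum-to-classical reduction of~\cite{bgm}, which converts the rank estimate into a classical intersection number on a smaller Grassmannian whose nonvanishing (and the dichotomy rank $1$ vs.\ rank $\ge 2$) is governed by Horn-type inequalities and explicit transversality of flags.
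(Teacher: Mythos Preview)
Your proposal has a genuine gap in Part~(2), and the inductive framework in both parts is not set up in a way that can close.

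\medskip

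\textbf{The characterization of $\Lambda$ is wrong.} You assert that the weights in $\Lambda$ are exactly those whose Young diagram has ``at most one removable corner strictly below the top full-width rows,'' and that any $\lambda\notin\Lambda$ therefore has at least two such corners, furnishing two fusion channels. This is false. Take $n\ge 4$, $\ell=2$, and $\lambda=\omega_2=(1,1)$. This partition has no row of length $\ell=2$ and has exactly one removable corner, yet $\omega_2\notin\Lambda$ (the elements of $\Lambda$ at level~$2$ are $\emptyset,(1),(2),(2,1),(2,2),(2,2,1),\ldots$ and the column $(1^{n-1})$, never $(1,1)$). So your branching argument produces only a single $\mu$ here and cannot conclude $\op{rk}\ge 2$. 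In fact this case is precisely the base case the paper has to handle by a direct quantum Schubert computation (Lemma~\ref{tinyrank}), and it does not fall out of any corner-counting heuristic.

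\medskip

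\textbf{The induction is on the wrong variable.} Factorization at $\delta_{0,2}$ replaces $\mathbb{V}(\sL_n,\lambda^n,\ell)$ by a sum of bundles $\mathbb{V}(\sL_n,\mu,\lambda^{n-2},\ell)$ on $\ovop{M}_{0,n-1}$. The Lie algebra is still $\sL_n$, but the weight vector is no longer of the form $\nu^k$, so you have left the class of bundles covered by your induction hypothesis. You would need a statement about arbitrary (non-symmetric) weight vectors, which you do not formulate. The same issue undermines your Part~(1) induction: after one step the problem is no longer $S_k$-invariant, and ``careful bookkeeping'' is doing all the work. Also, for $\lambda=(\ell^i,m)$ the product $\sigma_\lambda\star\sigma_\lambda$ is not governed by any Pieri-type rule as you claim; Pieri applies only to multiplication by a single-row class.

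\medskip

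\textbf{How the paper actually proceeds.} The paper avoids factorization on $\ovop{M}_{0,n}$ entirely. For Part~(1) it uses the Giambelli identity $\sigma_{(\ell^i,m)}=\sigma_\ell^{\star i}\star\sigma_m$ inside $\QH^*(\Gr(n,n+\ell))$ to strip off the full-width rows (Lemma~\ref{technical}), reducing $\op{rk}\mathbb{V}(\sL_n,((\ell-m)\omega_i+m\omega_{i+1})^n,\ell)$ to $\op{rk}\mathbb{V}(\sL_n,(m\omega_1)^n,\ell)$, which is $1$ by a single classical Pieri computation. For Part~(2) it combines three ingredients: the elementary inequality $\op{rk}\mathbb{V}(\sL_n,\mu^n,\ell)\le\op{rk}\mathbb{V}(\sL_n,\mu^n,\ell+c)$, the same row-stripping Lemma~\ref{technical}, and the surjection of \cite[Prop.~17.1]{bgm} to reduce every $\mu\notin\Lambda$ to the single base case $\op{rk}\mathbb{V}(\sL_n,(\omega_i)^n,2)>1$ for $1<i<n-1$, which is then verified by an explicit quantum product (Lemma~\ref{tinyrank}). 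This base case is exactly the one your corner argument misses.
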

\noindent

\medskip

For any fixed $n>3$ we define  $\mathcal S$ to be the set of of all Chern classes of  rank 1 vector bundles, as classified by the Theorem~\ref{Main}:
$$\mathcal{S}=\{c_1\mathbb{V}({\sL_{n},((\ell-m)\omega_i+m\omega_{i+1})^{n},\ell}): \ell>0,  0 \le m \le \ell , 0 \le i \le n-1\}.$$

There are tools for studying rank one bundles (see Section \ref{RankOneTools}), which we use to give the following simple description as positive linear combinations of level one divisors.

\begin{proposition}\label{Decomposition} For any $\mathbb D \in \mathcal S$ we have the following decomposition:
 $$\mathbb{D}({\sL_{n},\!((\ell\!-\!m)\omega_i\!+\!m\omega_{i+1})^{n}\!,\ell})\!=\!(\ell-m)\mathbb{D}(\sL_{n},\!(\omega_i)^n,\!1)+ m \mathbb{D}(\sL_{n},(\omega_{i+1})^{n},1).$$
\end{proposition}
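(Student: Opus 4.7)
The strategy is to realize the line bundle on the left as an explicit tensor product of level-one line bundles via a canonical multiplication map of conformal blocks. First, note that all three bundles are rank one: the level-one bundles $\mathbb{V}(\sL_n,(\omega_i)^n,1)$ and $\mathbb{V}(\sL_n,(\omega_{i+1})^n,1)$ are classical (and are also covered by Theorem~\ref{Main}(1) with $\ell=1$, $m=0$), and the level-$\ell$ bundle on the right is rank one by Theorem~\ref{Main}(1) since $(\ell-m)\omega_i+m\omega_{i+1}\in\Lambda$. Once an isomorphism of line bundles between the two sides is produced, taking $c_1$ and applying $c_1(L\otimes L')=c_1(L)+c_1(L')$ immediately delivers the proposition.

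The Lie-theoretic input that powers the construction of such a morphism is that $V_{(\ell-m)\omega_i+m\omega_{i+1}}$ is the unique highest-weight (Cartan) summand of the $\sL_n$-module $V_{\omega_i}^{\otimes(\ell-m)}\otimes V_{\omega_{i+1}}^{\otimes m}$, by Pieri/Littlewood--Richardson. The rank-one tools collected in Section~\ref{RankOneTools}, in the spirit of \cite{bgm}, promote this representation-theoretic inclusion at each of the $n$ marked points to a canonical bundle morphism
\[
\mathbb{V}(\sL_n,(\omega_i)^n,1)^{\otimes(\ell-m)}\otimes\mathbb{V}(\sL_n,(\omega_{i+1})^n,1)^{\otimes m}\longrightarrow\mathbb{V}(\sL_n,((\ell-m)\omega_i+m\omega_{i+1})^n,\ell),
\]
compatible with the levels $(\ell-m)\cdot 1+m\cdot 1=\ell$ and with the decomposition of weights at each marked point.

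The core step, which I expect to be the main obstacle, is showing that this morphism is nowhere vanishing, not merely generically nonzero. Generic nonvanishing is cheap: on the fiber over a fixed smooth curve both sides are one-dimensional and the map specializes to the top-component projection on invariants, which is nonzero by the representation-theoretic fact above. To upgrade to an everywhere-nonzero statement I would argue by induction on $n$ using sewing/factorization at the boundary of $\ovop{M}_{0,n}$: on each boundary stratum, factorization expresses both sides as products of lower-$n$ rank-one blocks whose weights at the new node are forced to be consistent (by the rank-one hypothesis) and the multiplication map on the total space restricts compatibly to the multiplication map on each stratum. The rank-one machinery of Section~\ref{RankOneTools} is then used to ensure that the inductive hypothesis applies, so the restricted map is an isomorphism on each boundary stratum, which together with generic nonvanishing shows the map is an isomorphism on all of $\ovop{M}_{0,n}$.
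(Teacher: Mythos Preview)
Your approach is the right idea---a multiplication map between conformal blocks of rank one yields an isomorphism of line bundles and hence an identity of first Chern classes---and this is exactly the mechanism underlying the paper's proof. But the paper does not build the argument from scratch: it simply invokes \cite[Prop.~17.1]{bgm} and \cite[Cor.~17.3]{bgm}, which already assert that when all bundles involved have rank one, the multiplication map is an isomorphism (equivalently, the divisor class splits additively). The paper also routes through an intermediate step you skip: it first splits level $\ell$ into levels $\ell-m$ and $m$ via $\mathbb{D}(\sL_n,((\ell-m)\omega_i+m\omega_{i+1})^n,\ell)=\mathbb{D}(\sL_n,((\ell-m)\omega_i)^n,\ell-m)+\mathbb{D}(\sL_n,(m\omega_{i+1})^n,m)$, using Belkale's quantum Fulton conjecture \cite{Bel07} to know $\op{rk}\mathbb{V}(\sL_n,(N\omega_j)^n,N)=1$, and then applies \cite[Cor.~17.3]{bgm} to scale each piece down to level one.

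Your factorization-induction sketch for nowhere-vanishing, by contrast, has a genuine gap. After degenerating to a boundary stratum, the factored pieces live on $\ovop{M}_{0,k}$ with $k<n$ and carry weights that are no longer $S_k$-invariant nor of the form $\mu^k$; Theorem~\ref{Main} says nothing about such bundles, so your inductive hypothesis is not the statement you are trying to prove. You also appeal to ``the rank-one tools collected in Section~\ref{RankOneTools}'' for the bundle morphism and the inductive step, but Section~\ref{RankOneTools} contains only Witten's Dictionary and the Pieri/Giambelli formulas---no statements about multiplication maps or their nonvanishing. The tools you actually need are precisely \cite[Prop.~17.1]{bgm} and \cite[Cor.~17.3]{bgm}; once you cite them, your entire factorization argument becomes unnecessary.
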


It therefore follows  that the cone generated by this infinite set of divisors $\mathcal{S}$ is in fact  polyhedral:

\begin{corollary} \label{LevelOne} The cone of divisors generated by  $S$ is the convex hull of the 
 $\lfloor \frac{n}{2}\rfloor-1$ extremal rays of the $\operatorname{S}_n$-invariant nef cone spanned by divisors $ \mathbb{D}({\sL_n, \omega_i^n, 1})$, where $2\leq i \leq \lfloor \frac{n}{2}\rfloor$.
 \end{corollary}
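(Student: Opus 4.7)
The plan is to apply Proposition \ref{Decomposition} to each element of $\mathcal{S}$ and then collapse the resulting list of level one classes down to the $\lfloor n/2 \rfloor - 1$ claimed extremal generators.

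By Proposition \ref{Decomposition}, every class in $\mathcal{S}$ is a non-negative linear combination of the level one classes $\mathbb{D}(\sL_n, \omega_i^n, 1)$ with $0 \le i \le n$. Conversely, for each $1 \le i \le n-1$, the class $\mathbb{D}(\sL_n, \omega_i^n, 1)$ already lies in $\mathcal{S}$ (take $\ell = 1$ and $m = 0$). So the cone generated by $\mathcal{S}$ coincides with the cone spanned by $\{\mathbb{D}(\sL_n, \omega_i^n, 1) : 0 \le i \le n\}$, and it only remains to pare this generating set down to the stated range.

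Next I would discard redundant generators. Since $\omega_0$ and $\omega_n$ are the trivial weight of $\sL_n$, the classes $\mathbb{D}(\sL_n, \omega_0^n, 1)$ and $\mathbb{D}(\sL_n, \omega_n^n, 1)$ vanish. The Dynkin involution of $\sL_n$ sends $\omega_i$ to $\omega_{n-i}$ and induces an isomorphism $\mathbb{V}(\sL_n, \omega_i^n, 1) \cong \mathbb{V}(\sL_n, \omega_{n-i}^n, 1)$, whence $\mathbb{D}(\sL_n, \omega_i^n, 1) = \mathbb{D}(\sL_n, \omega_{n-i}^n, 1)$; this restricts the index range to $1 \le i \le \lfloor n/2 \rfloor$. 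The last input is that $\mathbb{D}(\sL_n, \omega_1^n, 1) = 0$, which I would verify by a direct application of Fakhruddin's formula to the rank one bundle $\mathbb{V}(\sL_n, \omega_1^n, 1)$, or by citing the explicit computation in \cite{AGSS}.

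These reductions leave exactly the $\lfloor n/2 \rfloor - 1$ divisors $\mathbb{D}(\sL_n, \omega_i^n, 1)$ for $2 \le i \le \lfloor n/2 \rfloor$, which proves the claimed convex hull description. The assertion that these divisors span distinct extremal rays of the $S_n$-invariant nef cone of $\ovop{M}_{0,n}$ is the main result of \cite{AGSS} and can be cited directly. The principal technical input beyond Proposition \ref{Decomposition} is thus the vanishing of $\mathbb{D}(\sL_n, \omega_1^n, 1)$; I expect this to be a short calculation, so no substantial obstacle is anticipated.
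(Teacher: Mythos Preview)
Your proposal is correct and follows essentially the same route as the paper: apply Proposition~\ref{Decomposition} to see that every element of $\mathcal{S}$ is a nonnegative combination of the level one classes, and then cite \cite{AGSS} for the extremality of $\mathbb{D}(\sL_n,\omega_i^n,1)$ with $2\le i\le \lfloor n/2\rfloor$. The paper treats the corollary as an immediate consequence and does not spell out the index reduction; your argument is more explicit in checking the reverse containment and in discarding the redundant generators (triviality of $\omega_0,\omega_n$, the duality $\omega_i\leftrightarrow\omega_{n-i}$, and the vanishing of $\mathbb{D}(\sL_n,\omega_1^n,1)$), all of which are indeed straightforward and handled in \cite{AGSS}.
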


\subsection*{Acknowledgements} I am grateful to  Angela Gibney for many useful discussions, comments, and encouragement. I thank Prakash Belkale for pointing out the proof of the Lemma~\ref{bgmineq}, and for the comments on a draft of the paper. I also thank Dustin Cartwright and Linda Chen for helpful conversations.

 \section{Tools for computing ranks of conformal blocks bundles}\label{RankOneTools}

 We refer the reader to~\cite{bgm} for background information on conformal blocks vector bundles and divisors.
To compute the ranks of the vector bundles of conformal blocks, we use a special case of ``Witten's Dictionary", which is covered in Section~\ref{wd}.  The classical and quantum versions of the Pieri and Giambelli formulas are as often useful for applying Witten's dictionary, and we state those in Section~\ref{QGP}.

\subsection{Cohomological version of Witten's Dictionary}\label{wd} 
 Recall that the (small) quantum cohomology ring $\op{QH}^*(\op{Gr}(n,E);\mathbb{Z})$, for $E \cong \mathbb{C}^{n+\ell}$, is a $\mathbb{Z}[q]$-algebra isomorphic to 
$\op{H}^*(\op{Gr}(n,E);\mathbb{Z})\otimes_{\mathbb{Z}}\mathbb{Z}[q]$, as a module over $\mathbb{Z}[q]$, with elements  $\sigma_{\lambda}= \sigma_{\lambda}\tensor 1$,  where $\sigma_{\lambda} \in  \op{H}^*(\op{Gr}(n,E);\mathbb{Z})$ the cohomology class corresponding to the Schubert variety 
$\Omega_{\lambda}(F_{\bullet})$, where $F_{\bullet}$ is a full flag and $\lambda$ a partion (see for example \cite{Bertram} for the definitions).

 To compute the rank of our particular $S_n$-invariant conformal blocks bundles $\mathbb{V}({\sL_{n},\lambda^n,\ell})$, for $\sL_n$, one proceeds as follows.  For $|\lambda|=k$,
write $k=\ell+s$.  There are two cases:

  \begin{enumerate}
\item  If $s \le 0$,  then 
$\op{rk}\mathbb{V}({\sL_{n},\vec{\lambda},\ell}) $ is equal to the coefficient of the class of a point
 $\sigma_{k \omega_{n}}$  in the product  $\sigma_{\lambda_1} \cdots \sigma_{\lambda_n} \in \op{H}^*(\op{Gr}(n, \mathbb C^{n+k})).$

\item  If $s > 0$, then 
$\op{rk}\mathbb{V}({\sL_{n},\vec{\lambda},\ell})$ is  equal to the coefficient of the class of  $q^s[\mathrm{pt}]$ , where $([\mathrm{pt}]=\sigma_{\ell \omega_{n}}=\sigma_{(\ell,\ldots,\ell)})$ in the product  $\sigma_{\lambda_1} \star \cdots \star \sigma_{\lambda_n} \star \sigma_{\ell \omega_1}^s \in \op{QH}^*(\op{Gr}(n, E)).$

\end{enumerate}
See \cite[Theorem 3.6 and Remark 3.8]{b4} for the most general statement and  proof of Witten's Dictionary.

\subsection{Pieri and Giambelli formulas}\label{QGP}
 For convenience, we state  the classical and quantum versions of the  Pieri and Giambelli  formulas here.

\subsubsection*{Classical Pieri formula} If the Young diagram associated to $\lambda$ is contained in a $n\times \ell$ grid, and $i\leq n$, then the product of Schubert classes 
in the cohomology ring $\operatorname{H}^*\Gr(n,\ell+n)$ is given by
$$\sigma_{\lambda}\cdot \sigma_{p\omega_1} = \sum \sigma_{\pi},$$
where the sum is over all partitions $\pi$ obtained by adding $i$ boxes to $\lambda$, no two in the same column. 

\subsubsection*{Quantum Pieri formula}\cite{Bertram}  If the Young diagram associated to $\lambda$ is contained in a $n\times \ell$ grid, and $p\leq \ell$, then the product of Schubert classes 
in the quantum cohomology ring $\QH\Gr(n,\ell+n)$ is given by
$$\sigma_{\lambda}\star \sigma_{p\omega_1} = \sum \sigma_{\mu}+ q\sum \sigma_{\nu},$$
where the first sum is over all partitions $\mu$ obtained by adding $p$ boxes to $\lambda$, no two in the same column,  and the  second sum is over all partitions $\nu$ obtained by removing $n+\ell - p$ boxes from $\lambda$, at least one from each column.

\subsubsection*{Quantum Giambelli}\cite{Bertram} Set $\sigma_0=1$, $\sigma_i =0$ for $i< 0$ and for $i>\ell$.

If $\lambda$ is a partition contained in an $n \times \ell$ rectangle, then the Schubert class $\sigma_{\lambda} \in \QH \Gr(n, n+\ell)$ is given by 
$$\sigma_{\lambda} = \det(\sigma_{\lambda^{(i)}+j-i})_{1\leq i, j\leq n}.$$

\section{Rank one bundles}  
In this section we prove the first part of Theorem \ref{Main}, which states that the bundles $\mathbb{V}({\sL_{n}, (\lambda)^n,\ell}) $ have rank one if 
$\lambda\in \Lambda$. We will need the following Lemma.
 
\begin{lemma} 
\label{p:equalranks}\label{technical}
Let $\lambda = (\lambda^{(1)}, \dots, \lambda^{(n-1)}, 0)$ be such that $\lambda^{(1)}= \dots = \lambda^{(i)}=\ell$, $\lambda^{(i+1)}<\ell$. Denote by $\mu$ the partition $(\lambda^{(i+1)}, \dots, \lambda^{(n-1)}, 0, \dots, 0)$.
Then 
\begin{equation}
\op{rk}\mathbb{V}({\sL_{n}, \lambda^n,\ell})=\op{rk}\mathbb{V}({\sL_{n}, \mu^n,\ell}).
\end{equation}
\end{lemma}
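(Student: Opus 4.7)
Proof plan: By Witten's Dictionary (Section~\ref{wd}), both ranks can be expressed as coefficients in $\QH(\Gr(n,n+\ell))$. Writing $s_\lambda=|\lambda|-\ell$ and $s_\mu=|\mu|-\ell$, one has $s_\lambda=s_\mu+i\ell$ since $|\lambda|=i\ell+|\mu|$, and the rank on the $\lambda$-side equals the coefficient of $q^{s_\lambda}[\mathrm{pt}]$ in $\sigma_\lambda^{\star n}\star\sigma_{\ell\omega_1}^{\star s_\lambda}$ (the $\mu$-side is analogous, with casework if $s_\mu<0$).

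The key algebraic input is the identity
\[
\sigma_\lambda \;=\; \sigma_{(\ell^i)}\star\sigma_\mu \quad\text{in }\QH(\Gr(n,n+\ell)).
\]
Classically this is an instance of the Littlewood--Richardson rule: since $\mu$ fits in the $(n-i)\times\ell$ rectangle, the only LR-contribution to $\sigma_{(\ell^i)}\cdot\sigma_\mu$ is the single partition $(\ell^i,\mu)=\lambda$. The identity lifts verbatim to $\QH(\Gr(n,n+\ell))$ because $\lambda$ already sits inside the $n\times\ell$ window, leaving no room for a Bertram--Ciocan-Fontanine--Fulton rim-hook correction. Using this,
\[
\sigma_\lambda^{\star n}\star\sigma_{\ell\omega_1}^{\star s_\lambda} \;=\; \bigl(\sigma_{(\ell^i)}^{\star n}\star\sigma_{\ell\omega_1}^{\star i\ell}\bigr)\star\bigl(\sigma_\mu^{\star n}\star\sigma_{\ell\omega_1}^{\star s_\mu}\bigr),
\]
and the task reduces to showing that the ``excess'' prefactor contributes a net multiplicative $q^{i\ell}$ when we extract the $q^{s_\lambda}[\mathrm{pt}]$-coefficient. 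I would verify this by iterating quantum Pieri (Section~\ref{QGP}): each successive $\sigma_{\ell\omega_1}$ meets a partition whose first row is full, so the rim-hook rule generates exactly one factor of $q$ per step, and after $i\ell$ steps the prefactor collapses to $q^{i\ell}$.

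The main obstacle will be the bookkeeping through these $i\ell$ iterated quantum Pieri expansions, together with the mixed case $s_\mu<0$, where the $\mu$-side of Witten's Dictionary lives in the smaller Grassmannian $\Gr(n,n+|\mu|)$ and must be reconciled with the $\Gr(n,n+\ell)$-computation on the $\lambda$-side via an embedding or an additional rim-hook reduction. A shorter, conceptually cleaner alternative---which I would fall back to if the explicit expansion proves unwieldy---uses the simple-current (diagram-automorphism) symmetry of $\widehat{\sL_n}$ at level $\ell$: in affine coordinates $(a_0,\ldots,a_{n-1})$ given by $a_0=\ell-\lambda^{(1)}$ and $a_j=\lambda^{(j)}-\lambda^{(j+1)}$, the hypothesis $\lambda^{(1)}=\cdots=\lambda^{(i)}=\ell$ forces $a_0=\cdots=a_{i-1}=0$, so the cyclic shift $\tau^i$ carries $\lambda$ to $\mu$; invariance of conformal blocks under the diagonal action of $\tau$ on all marked weights then yields the lemma immediately.
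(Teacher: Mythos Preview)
Your primary approach is essentially the paper's: it too invokes Witten's Dictionary, establishes $\sigma_\lambda=\sigma_\ell^{\star i}\star\sigma_\mu$ (there via the quantum Giambelli determinant rather than Littlewood--Richardson, but this is the same identity once one notes $\sigma_{(\ell^i)}=\sigma_\ell^{\star i}$ by Pieri), and then collapses the excess $\sigma_\ell$-powers using $\sigma_\ell^{\star(n+\ell)}=q^\ell$, splitting into the two cases $|\mu|>\ell$ and $|\mu|\le\ell$ exactly as you anticipate. One organizational tip: rather than tracking ``$i\ell$ iterated quantum Pieri expansions'' applied to $\sigma_{(\ell^i)}^{\star n}$, substitute $\sigma_{(\ell^i)}=\sigma_\ell^{\star i}$ first, so that your entire prefactor becomes $\sigma_\ell^{\star i(n+\ell)}=q^{i\ell}$ in one stroke; this is precisely how the paper sidesteps the bookkeeping you flag as the main obstacle.

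Your fallback via the simple-current (diagram-automorphism) symmetry is a genuinely different route that the paper does not take. It is shorter and more conceptual, and it works here precisely because the number of marked points equals $n$: the automorphism $\tau$ of $\widehat{\sL_n}$ has order $n$, so applying it diagonally to all $n$ insertions contributes $\tau^n=\mathrm{id}$ on the vacuum output, and the conformal-block dimension is unchanged. This bypasses Witten's Dictionary entirely and avoids the $|\mu|\le\ell$ Grassmannian comparison you flag; the trade-off is that it imports a representation-theoretic fact external to the Schubert-calculus toolkit the paper is set up to use.
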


\begin{proof} Using Giambelli formula, we can write $\lambda = \sigma_{\ell}^i \mu$ since

\begin{tiny}
$\lambda=  \left| \begin{matrix}
\sigma_{\ell}&0 &\dots& 0&0&\dots & 0&0\\
* & \sigma_{\ell} &\dots&0&0& \dots& 0&0 \\
 \dots& \dots& \dots&  \dots &  \dots & \dots &  \dots&  \dots\\
*& *& \dots &\sigma_{\ell}&  0 & \dots & 0&0\\

*& *& \dots & * & \sigma_{\lambda^{(i+1)}}&\dots & *&* \\
 \dots&  \dots& \dots &  \dots& \dots&\dots &  \dots& \dots \\
*& *& \dots & * & *&\dots & \sigma_{\lambda^{(n-1)}}& *\\
0 & 0 & \dots& 0& 0& \dots&0&  1

\end{matrix}\right| = \sigma_{\ell}^i
 \left| \begin{matrix}

 \sigma_{\lambda^{(i+1)}}&\dots & *&*&*& \dots & * \\
  \dots& \dots&\dots &  \dots& \dots &\dots &\dots   \\
 *&\dots & \sigma_{\lambda^{(n-1)}}& *& * & \dots & *\\
 0& \dots&0&  1& * & \dots & *\\
  0& \dots&0& 0 & 1  & \dots & *\\
 0& \dots&0&  0& 0  & \dots & 1\\
\end{matrix}\right| =  \sigma_{\ell}^i \mu.
$

\end{tiny}

We compute $\op{rk}\mathbb{V}({\sL_{n}, \lambda^n,\ell})$ using Witten's dictionary. 
Write $n |\lambda| = n \ell + n s$, so that $s = |\lambda|-\ell$,  and then $\op{rk}\mathbb{V}({\sL_{n}, \lambda^n,\ell})$ is equal to  a coefficient of $q^s [\mathrm{pt}] = q^s\sigma_{(\ell, \dots, \ell)}$ in the quantum product 
$\sigma_{\lambda}^{\star n}\star \sigma_{\ell}^s \in \op{QH} ^{*}(\Gr (n, n+\ell))).$
We have $$\sigma_{\lambda}^{\star n}\star \sigma_{\ell}^s= \sigma_{\ell}^{\star in}\star \sigma_{\mu}^{\star n}\star \sigma_{\ell}^s.$$
Write $|\lambda| = i\ell + |\mu|$, then $s = (i-1)\ell+ |\mu|$, so that $\sigma_{\lambda}^{\star n}\star \sigma_{\ell}^s = 
\sigma_{\ell}^{\star in}\star \sigma_{\mu}^{\star n}\star \sigma_{\ell}^{\star((i-1)\ell+ |\mu|)} = \sigma_{\ell}^{\star(in + (i-1)\ell + |\mu|)}\star \sigma_{\mu}^{\star n}.$
Note that $\sigma_{\ell}^{\star n} = \sigma_{(\ell, \dots, \ell)}$ and $\sigma_{\ell}^{\star (n+\ell)} = q^{\ell}\sigma_{(0, \dots, 0)}$ by quantum Pieri rule.  Thus, we have  $ \sigma_{\ell}^{\star(i-1)(\ell +n) } = q^{(i-1)\ell}\sigma_{(0, \dots, 0)}$.
Therefore 
$$\sigma_{\lambda}^{\star n}\star \sigma_{\ell}^s= q^{(i-1)\ell}\sigma_{\ell}^{\star(n + |\mu|)}\star \sigma_{\mu}^{\star n}.$$

If $|\mu|>\ell$, then $\sigma_{\lambda}^{\star n}\star \sigma_{\ell}^s = q^{i\ell}\sigma_{\ell}^{\star( |\mu|-\ell)}\star \sigma_{\mu}^{\star n}.$ In this case the coefficient of 
$q^s [\mathrm{pt}]$ in the quantum product 
$\sigma_{\lambda}^{\star n}\star \sigma_{\ell}^s$ is equal to the coefficient of $q^{|\mu|-\ell} [\mathrm{pt}]$ in the quantum product $\sigma_{\ell}^{\star( |\mu|-\ell)}\star \sigma_{\mu}^{\star n}   \in \op{QH} ^{*}(\Gr (n, n+\ell))).$ Note that the latter equals to the $\op{rk}\mathbb{V}({\sL_{n}, \mu^n,\ell})$ by Witten's dictionary.

If $|\mu|\leq \ell$, then $\sigma_{\lambda}^{\star n}\star \sigma_{\ell}^s = q^{i(\ell-1)}\sigma_{\ell}^{\star( n+ |\mu|)}\star \sigma_{\mu}^{\star n}.$ Since by the Pieri rule, $\sigma_{\ell}^{\star( n+ |\mu|)} = q^{|\mu|}\sigma_{(\ell-|\mu|, \dots, \ell-|\mu| )}$, we conclude that $\sigma_{\lambda}^{\star n}\star \sigma_{\ell}^s = q^{i(\ell-1)+ |\mu|}\sigma_{(\ell-|\mu|, \dots, \ell-|\mu| )} \star \sigma_{\mu}^{\star n} = q^s\sigma_{(\ell-|\mu|, \dots, \ell-|\mu| )} \star \sigma_{\mu}^{\star n}$. In this case the coefficient of 
$q^s [\mathrm{pt}]$ in the quantum product 
$\sigma_{\lambda}^{\star n}\star \sigma_{\ell}^s$ is equal to the coefficient of $\sigma_{(\ell, \dots, \ell)}$ in the classical product $\sigma_{(\ell-|\mu|, \dots, \ell-|\mu| )} \cdot \sigma_{\mu}^{ n} \in \op{H}^* \Gr(n, n+\ell)$. Note that this coefficient is  equal to the coefficient of $\sigma_{(|\mu|, \dots, |\mu|)}$ in the classical product $\sigma_{\mu}^{ n} \in \op{H}^* \Gr(n, n+|\mu|)$, which is equal to  $\op{rk}\mathbb{V}({\sL_{n}, \mu^n,\ell})$ by Witten's dictionary. 
\end{proof}

\begin{proposition}
\label{allrank1}
$\mathbb{V}({\sL_{n}, ((\ell-m)\omega_i+m\omega_{i+1})^n,\ell})$ are rank one bundles. 
\end{proposition}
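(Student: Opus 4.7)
The plan is to apply Lemma~\ref{technical} to collapse the ``rectangular'' part of $\lambda=(\ell-m)\omega_i+m\omega_{i+1}$, reducing the computation to the single-row case $\mu=m\omega_1$, and then to evaluate the resulting coefficient by iterating the classical Pieri rule.

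First I would identify $\lambda$ with the partition $(\ell,\ldots,\ell,m,0,\ldots,0)$ whose first $i$ parts equal $\ell$ and whose $(i+1)$-st part is $m$. After absorbing the boundary cases ($m=\ell$ reindexes to $\ell\omega_{i+1}$, and $i=n-1$ collapses via $\omega_n\equiv 0$ as an $\sL_n$-weight), one may assume $0\le m<\ell$ and $0\le i\le n-2$, so that the hypotheses $\lambda^{(1)}=\cdots=\lambda^{(i)}=\ell$ and $\lambda^{(i+1)}=m<\ell$ of Lemma~\ref{technical} are in force. The lemma then yields
$$\op{rk}\mathbb{V}(\sL_{n},\lambda^n,\ell)=\op{rk}\mathbb{V}(\sL_{n},\mu^n,\ell),\qquad \mu=m\omega_1.$$

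Since $|\mu|=m\le \ell$, the classical branch of Witten's dictionary (the $s\le 0$ case of Section~\ref{wd}) identifies this rank with the coefficient of the point class $\sigma_{m\omega_n}=\sigma_{(m^n)}$ in the product $\sigma_{(m)}^n\in \op{H}^*(\op{Gr}(n,n+m))$. I would evaluate this product by iterating the classical Pieri rule inside the $n\times m$ ambient rectangle: if $\sigma_{(m^k)}$ with $k<n$ is multiplied by $\sigma_{(m)}$, one must add $m$ boxes with no two in the same column; since columns $1,\ldots,m$ are already filled up to row $k$, the only way to obtain a valid partition is to place exactly one new box in row $k+1$ of every column, producing $\sigma_{(m^{k+1})}$. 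Induction gives $\sigma_{(m)}^n=\sigma_{(m^n)}=[\mathrm{pt}]$ with coefficient $1$, which is the claimed rank.

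I do not anticipate a substantive obstacle: Lemma~\ref{technical} already performs the only nontrivial quantum bookkeeping (using the identity $\sigma_\ell^{\star(n+\ell)}=q^\ell$ in its proof), after which the remaining computation is the shortest possible Pieri argument. The only place that might need minor care is verifying that the boundary combinations $i\in\{0,n-1\}$ and $m\in\{0,\ell\}$ in the index set $\Lambda$ are all covered by, or degenerate to, the reduction above; this is a labelling matter rather than a real difficulty.
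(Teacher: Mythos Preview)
Your proposal is correct and follows essentially the same route as the paper: apply Lemma~\ref{technical} to strip off the $i$ rows of length $\ell$, reducing to $\mu=m\omega_1$, then invoke the classical ($s\le 0$) case of Witten's dictionary and compute $\sigma_{(m)}^n=\sigma_{(m^n)}$ in $\op{H}^*(\Gr(n,n+m))$ via Pieri. Your write-up is slightly more explicit about the boundary cases and the Pieri induction, but there is no substantive difference in the argument.
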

\begin{proof} By Proposition~\ref{p:equalranks}, we have $ \op{rk}\mathbb{V}({\sL_{n},((\ell-m)\omega_i+m\omega_{i+1})^n,\ell} )= \op{rk}\mathbb{V}({\sL_{n},(m \omega_1)^n,\ell} )$.

Since $n\cdot |m\omega_1| = nm\leq n \ell$, by Witten's dictionary, $\op{rk}\mathbb{V}({\sL_{n},(m \omega_1)^n,\ell} )$ is equal to the multiplicity of class of the point $\sigma_{(m, \dots, m)}$ in the product $\sigma_{m}^n\in H^* \Gr(n, n+m)$. Since $\sigma_{m}^n = \sigma_{(m, \dots, m)}$ by Pieri rule, we have $\op{rk}\mathbb{V}({\sL_{n},(m \omega_1)^n,\ell} )=1$, and we conclude that the rank  $ \op{rk}\mathbb{V}({\sL_{n},((\ell-m)\omega_i+m\omega_{i+1})^n,\ell} )$ is equal to 1.
\end{proof}

\section{Higher rank bundles}  

Recall  from the Introduction that $\Lambda$ is the set of weights $\{(\ell-m)\omega_i+m\omega_{i+1}:   0\leq m \leq \ell,\  0 \le i \le n-1\}$.  In Proposition~\ref{not1}, we will show that $\operatorname{rk}\mathbb{V}({\sL_n,\mu^n,\ell})>1$, for $\mu\not\in \Lambda$. Combined with Proposition~\ref{allrank1}, this  completes the proof of Theorem~\ref{Main}. We begin with some special cases, and the main proof will reduce to the special cases.

\begin{lemma} We have  \label{tinyrank}$\op{rk}\mathbb{V}({\sL_{n},(\omega_i )^n, 2})>1$ for $1< i <  n-1$ and $n\geq 4$.
\end{lemma}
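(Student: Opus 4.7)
The approach is to apply Witten's dictionary from Section~\ref{wd} to identify the rank with a coefficient in the (quantum) Schubert calculus on $\op{Gr}(n, n+2)$. Since $|\omega_i| = i$ and $\ell = 2$, the computation is classical when $i = 2$ and quantum when $i \ge 3$. By the conformal-blocks duality induced by $\omega_i^* = \omega_{n-i}$, which preserves ranks, I may assume $2 \le i \le n/2$.

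For $i = 2$ the classical case applies, and Witten's dictionary yields that the rank equals the coefficient of the point class $\sigma_{(2^n)}$ in $\sigma_{(1,1)}^n \in H^*(\op{Gr}(n, n+2))$. Since $\sigma_{(1,1)} = e_2$ (Jacobi--Trudi), iterated classical Pieri gives this coefficient as the Kostka number $K_{(n,n),(2^n)}$, the number of semistandard Young tableaux of shape $(n,n)$ with content $(2^n)$. Via a standard bijection with Motzkin-like lattice paths of length $n$ from $0$ to $0$ with steps in $\{-1, 0, +1\}$, staying nonnegative, and having no two consecutive zeros, I would exhibit two explicit paths---for example, the ``flat'' path $(0, 1, 1, \ldots, 1, 0)$ and the ``peaked'' path $(0, 1, 2, 1, \ldots, 1, 0)$---both valid for $n \ge 4$, to conclude that $K_{(n,n),(2^n)} \ge 2$.

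For $3 \le i \le n/2$ (so $n \ge 6$), the quantum case applies. I would use the factorization of conformal blocks at a nodal rational curve (with $2$ marked points on one component and $n-2$ on the other) to write
\[
\operatorname{rk}\mathbb{V}(\sL_n, \omega_i^n, 2) = \sum_{\mu} N_{\omega_i, \omega_i}^{\mu^*} \cdot \operatorname{rk}\mathbb{V}(\sL_n, (\mu^*, \omega_i^{n-2}), 2),
\]
summing over dominant weights $\mu$ at level $2$. Classical Pieri applied to $(1^i)^{\otimes 2}$ gives the decomposition $V_{\omega_i} \cdot V_{\omega_i} = \bigoplus_{j=0}^i V_{(2^j,\,1^{2i-2j})}$, yielding $i + 1 \ge 4$ distinct summands with $N_{\omega_i,\omega_i}^{\mu^*} = 1$ each; all survive at level $2$ since $\lambda_1 \le 2$ throughout, so no Kac--Walton truncation is required. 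An induction on $n$, with the classical case at $n = 4$ as base, then verifies that at least two summands yield a nonzero second factor, giving rank $\ge 2$.

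The hard part will be the quantum case: carefully propagating the inductive hypothesis through the recursion and confirming positivity of at least two factorized second factors, rather than being able to reduce cleanly to a single classical counting problem as in the $i = 2$ case. An alternative would be a direct computation of the quantum Schubert coefficient in $\QH(\op{Gr}(n, n+2))$ using the quantum Giambelli and Pieri rules from Section~\ref{QGP}, but this involves substantially more bookkeeping of the $q$-corrected contributions.
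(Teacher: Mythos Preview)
Your treatment of the case $i=2$ is correct and close in spirit to the paper's: both use the classical branch of Witten's dictionary to reduce to the coefficient of $\sigma_{(2^n)}$ in $\sigma_{(1,1)}^n\in H^*(\Gr(n,n+2))$. The paper simply expands $\sigma_{\omega_2}^2=\sigma_{\omega_4}+\sigma_{\omega_3+\omega_1}+\sigma_{2\omega_2}$ and observes that each summand contributes positively to the top coefficient, giving rank $\ge 3$. Your Kostka-number reformulation $K_{(n,n),(2^n)}\ge 2$ is valid; the specific lattice-path constraint you quote (``no two consecutive zeros'') is not the right one, but the inequality is easily certified by exhibiting two tableaux directly, so this is a minor issue.

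The case $i\ge 3$, however, has a genuine gap. Your proposed ``induction on $n$'' via factorization does not make sense as stated: factorization at a node reduces the number of \emph{marked points}, not the rank of the Lie algebra, and in this problem the two are coupled (the bundle lives on $\overline{M}_{0,n}$ with Lie algebra $\sL_n$). After one factorization step you face ranks of the form $\op{rk}\mathbb{V}(\sL_n,(\mu^*,\omega_i^{\,n-2}),2)$, which are still $\sL_n$-bundles, now with $n-1$ points and a non-$S_n$-invariant weight vector; there is no smaller instance of the lemma to invoke. Consequently your base case ``$n=4$'' is not a base case for $i\ge 3$ at all (since $i\le n/2$ forces $n\ge 6$), and the crucial assertion that ``at least two summands yield a nonzero second factor'' is exactly the content that remains to be proved. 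Knowing that $\omega_i\star_2\omega_i$ has $i+1$ constituents does not by itself tell you that two of their duals appear in $\omega_i^{\star_2(n-2)}$; you have only established that \emph{one} does (the level-one piece), which recovers rank $\ge 1$ but not $>1$.

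The paper handles $i\ge 3$ by the direct quantum computation you set aside as ``more bookkeeping'': writing $n=iq+r$ and $\alpha=q+1$, one shows that $\sigma_{\omega_i}^{\star\alpha}\star\sigma_{2\omega_1}$ contains a specific term with coefficient $c\ge 2$ (since $\alpha\ge 3$), and iterating gives a lower bound $c^{\,i-2}\ge 2$ for the desired coefficient. This is in fact shorter and more transparent than a factorization-based induction would be, precisely because it stays inside $\QH^*(\Gr(n,n+2))$ and never has to control auxiliary mixed-weight conformal blocks.
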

\begin{proof} Since $\op{rk}\mathbb{V}({\sL_{n},(\omega_i )^n, 2} )= \op{rk}\mathbb{V}({\sL_{n},(\omega_{n-i} )^n, 2})$, without loss of generality we may assume that $i\leq \lfloor n/2\rfloor$.

Suppose that $i=2$. By Witten's dictionary, we have to find the coefficient of $\sigma_{(2,\dots,2)}$ in the classical product $\sigma_{\omega_2}^n$.  Note that $\sigma_{\omega_2}^2 = \sigma_{\omega_4}+ \sigma_{\omega_3+\omega_1}+ \sigma_{2\omega_2}$. For each of these terms we have $\sigma_{\omega_2}^{n-2} \cdot \sigma_{\omega_4} = a_1\sigma_{(2,\dots,2)} $,  $\sigma_{\omega_2}^{n-2} \cdot\sigma_{\omega_3+\omega_1} = a_2\sigma_{(2,\dots,2)} $, and  $\sigma_{\omega_2}^{n-2} \cdot \sigma_{2\omega_2} = a_3\sigma_{(2,\dots,2)} $, where each of the constants $a_1$, $a_2$, and $a_3$ are at least 1. Thus $\op{rk}\mathbb{V}({\sL_{n},(\omega_2 )^n, 2})\geq 3$.

Now assume that $i>2$. Since $ni = 2n + (i-2)n$,  by Witten's dictionary, we need to compute the coefficient of $q \sigma_{(2,\dots,2)}$ in the quantum  product $\sigma_{\omega_i}^{\star n}\star \sigma_{2\omega_1}^{i-2}$.

Write $n = iq+r$, where $0\leq r<i$, let $\alpha = q+1$. Then $\sigma_{\omega_i}^{\star n}\star \sigma_{2\omega_1}^{i-2} = 
\sigma_{\omega_i}^{n-\alpha(i-2)} \star \sigma_{\omega_i}^{\star \alpha(i-2)}\star \sigma_{2\omega_1}^{i-2}$, and
 $ \sigma_{\omega_i}^{\star \alpha}\star \sigma_{2\omega_1}  = c \sigma_{\omega_n + \omega_{i-r}}\star \sigma_{2\omega_1}+\mbox{other terms} = c q \sigma_{\omega_{r-i}} + \mbox{other  terms}.$
Note that since $\alpha\geq 3$, we always have $c\geq 2$, and all the other terms have nonnegative coefficients, since a product of effective cycles is effective.
Thus $\sigma_{\omega_i}^{\star n}\star \sigma_{2\omega_1}^{i-2} = c^{i-2}q^{i-2}\sigma_{\omega_i}^{n-\alpha(i-2)} \star \sigma_{\omega_{r-i}}^{i-2}+ \mbox{other  terms}.$ 

Since $\sigma_{\omega_i}^{n-\alpha(i-2)} \star \sigma_{\omega_{r-i}}^{i-2} = \sigma_{(2, \dots, 2)}+\mbox{ other terms} $, we conclude that $\op{rk}\mathbb{V}({\sL_{n},(\omega_i )^n, 2})\geq c^{i-2}$, and in particular,  we have $\op{rk}\mathbb{V}({\sL_{n},(\omega_i )^n, 2})>1$.
\end{proof}

\begin{lemma}\label{bgmineq}
We have  $\op{rk}\mathbb{V}({\sL_{n}, ((a \omega_i + b \omega_{i+1} )^n, a+b+1})>1$ for all $a\geq 0$, $b>0$, $1<i<n-1$.
\end{lemma}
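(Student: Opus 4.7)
The plan is to apply Witten's dictionary and show the resulting quantum Schubert intersection number is at least $2$ by producing two distinct nonzero contributions, in the spirit of the proof of Lemma~\ref{tinyrank}. Setting $\ell = a+b+1$ and viewing $\lambda = a\omega_i + b\omega_{i+1}$ as the partition $((a+b)^i, b, 0^{n-i-1})$, one computes $|\lambda| = i(a+b)+b$ and $s := |\lambda|-\ell = (i-1)(a+b)+b-1 \geq 0$ (using $i > 1$ and $b > 0$). By Witten's dictionary, $\op{rk}\mathbb{V}(\sL_n, \lambda^n, \ell)$ equals the coefficient of $q^s \sigma_{(\ell,\dots,\ell)}$ in $\sigma_\lambda^{\star n} \star \sigma_{\ell\omega_1}^s$ in $\QH(\op{Gr}(n, n+\ell))$, and we must show this coefficient exceeds $1$.

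A useful reference point is the nearby weight $\tilde\lambda = (a+1)\omega_i + b\omega_{i+1} \in \Lambda$. For $\tilde\lambda$ the first $i$ parts equal $\ell$, so Lemma~\ref{technical} applies and combined with a short Pieri computation yields $\op{rk}\mathbb{V}(\sL_n, \tilde\lambda^n, \ell) = \op{rk}\mathbb{V}(\sL_n, (b\omega_1)^n, \ell) = 1$. For our $\lambda$ the top $i$ parts are $\ell - 1$ rather than $\ell$, so Lemma~\ref{technical} does not directly apply; the ``missing column'' of length $i$ is precisely what one expects to yield an additional contribution in the quantum product. My plan is to expand $\sigma_\lambda$ via quantum Giambelli (or by iterated quantum Pieri), and to exhibit two distinct combinatorial paths ending at $q^s \sigma_{(\ell,\dots,\ell)}$: typically one path propagates the interior row of length $b$ at position $i+1$, while a second exploits the rectangular block of $i$ rows of length $a+b$ above it. The hypothesis $1 < i < n-1$ provides room both above and below row $i+1$ for these two distinct paths, mirroring the role of the same hypothesis in Lemma~\ref{tinyrank}.

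The main obstacle is the combinatorial bookkeeping of iterated quantum Pieri corrections and verifying that the two exhibited contributions are genuinely distinct and nonvanishing uniformly in $(a,b,i)$, especially in the boundary regimes where $a$ or $b$ is small. Positivity of Gromov--Witten invariants rules out cancellation between paths, but nonvanishing of each path still requires care. A cleaner route, consistent with the attribution to Belkale in the acknowledgements, is to invoke a general positivity or strict-inequality statement from~\cite{bgm} that gives a lower bound of $2$ directly once an easy combinatorial condition (implied by $1 < i < n-1$) is checked, thereby sidestepping the term-by-term computation.
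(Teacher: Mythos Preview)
Your proposal is not a proof: the direct quantum-Pieri approach is only sketched, you yourself flag the bookkeeping as the main obstacle, and you never actually exhibit the two nonvanishing contributions. The closing remark about invoking ``a general positivity or strict-inequality statement from~\cite{bgm}'' is too vague to count---you do not say which statement, what its hypotheses are, or why they hold here.

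The paper's argument is short and avoids all of the combinatorics you are worried about. The key move is a \emph{decomposition of weights and levels simultaneously}: write
\[
\bigl(a\omega_i + b\omega_{i+1},\; a+b+1\bigr)\;=\;\bigl(a\omega_i + (b-1)\omega_{i+1},\; a+b-1\bigr)\;+\;\bigl(\omega_{i+1},\; 2\bigr).
\]
The first summand lies in $\Lambda$, so by Proposition~\ref{allrank1} its bundle has rank one. Then \cite[Prop.~17.1]{bgm} gives a surjection
\[
\mathbb{V}\bigl(\sL_n,(a\omega_i+b\omega_{i+1})^n,a+b+1\bigr)\;\twoheadrightarrow\;\mathbb{V}\bigl(\sL_n,(a\omega_i+(b-1)\omega_{i+1})^n,a+b-1\bigr)\otimes\mathbb{V}\bigl(\sL_n,(\omega_{i+1})^n,2\bigr),
\]
whence the rank on the left is at least $\op{rk}\mathbb{V}(\sL_n,(\omega_{i+1})^n,2)$, which is $>1$ by Lemma~\ref{tinyrank}. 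Note that the rank-one hypothesis on one factor is exactly what makes \cite[Prop.~17.1]{bgm} applicable, and this is why the paper peels off $\omega_{i+1}$ at level~$2$ rather than something else.

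Your ``nearby weight'' $\tilde\lambda=(a+1)\omega_i+b\omega_{i+1}$ is the wrong perturbation: you moved within level $\ell=a+b+1$, whereas the paper drops the level by~$2$ and the weight by $\omega_{i+1}$. That choice lands the first factor in $\Lambda$ (rank one) and makes the second factor exactly the bundle already handled in Lemma~\ref{tinyrank}. Once you see this decomposition, no new Schubert calculus is needed.
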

\begin{proof} By Proposition~\ref{allrank1}, we have $\op{rk}\mathbb{V}({\sL_{n}, (a \omega_i + (b-1) \omega_{i+1} )^n, a+b-1})=1$. Therefore  by~\cite[Prop. 17.1]{bgm}, the map
\begin{equation*}
\mathbb{V}({\sL_{n}, (a \omega_i + b \omega_{i+1} )^n, a+b+1})\to \mathbb{V}({\sL_{n}, (a \omega_i + (b-1) \omega_{i+1} )^n, a+b-1})\otimes \mathbb{V}({\sL_{n}, ( \omega_{i+1}  )^n, 2})
\end{equation*}
is a surjection, and the rank $\op{rk} \mathbb{V}({\sL_{n}, (a \omega_i + b \omega_{i+1} )^n, a+b+1})$ is greater than or equal to the rank $\op{rk}  \mathbb{V}({\sL_{n}, ( \omega_{i+1}  )^n, 2})$. By Lemma~\ref{tinyrank}, we know that $\op{rk}\mathbb{V}({\sL_{n},(\omega_{i+1} )^n, 2})>1$.
\end{proof}

\begin{proposition}\label{not1} 
We have 
$\operatorname{rk}\mathbb{V}({\sL_n,\mu^n,\ell})>1$,
 if $\mu \not\in \Lambda$.
\end{proposition}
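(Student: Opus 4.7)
The plan is to mimic the proof of Lemma~\ref{bgmineq} and to iterate: combine Lemma~\ref{technical} with the level--weight factorization surjection of \cite[Prop.~17.1]{bgm} to reduce $\mu$ to a configuration covered by Lemma~\ref{bgmineq} or Lemma~\ref{tinyrank}, possibly after the rank-preserving duality $\omega_j\leftrightarrow\omega_{n-j}$.

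First I would apply Lemma~\ref{technical} to replace $\mu$ by the partition obtained by stripping all its rows of length~$\ell$; the rank is unchanged. Since the only weights in $\Lambda$ whose first row is strictly less than $\ell$ are the single-row $m\omega_1$, the reduced $\mu$ is still outside $\Lambda$ and satisfies $\mu^{(1)}<\ell$ and $\mu^{(2)}>0$. The special case $\mu'=0$, $\mu''=\mu$ of \cite[Prop.~17.1]{bgm} gives the surjection
\[
\mathbb{V}(\sL_n,\mu^n,\ell)\twoheadrightarrow
\mathbb{V}(\sL_n,\mu^n,\ell')\otimes\mathbb{V}(\sL_n,0^n,\ell-\ell')
\]
for any $\mu^{(1)}\le\ell'\le\ell$, so the rank is monotone in~$\ell$, and I may further assume $\ell$ is as small as possible. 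The goal is then to exhibit a decomposition $\mu=\mu'+\mu''$ with $\ell=\ell'+\ell''$ in which $\mu'\in\Lambda$ at level $\ell'$ (rank~$1$ by Proposition~\ref{allrank1}) and $\mu''$ is in the scope of Lemma~\ref{bgmineq}, forcing the rank of $\mathbb{V}(\sL_n,\mu^n,\ell)$ to exceed~$1$.

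Writing $\mu=c_1\omega_1+\cdots+c_{n-1}\omega_{n-1}$, I note that the parts $c_1\omega_1$ and $c_{n-1}\omega_{n-1}$ themselves lie in $\Lambda$ at levels $c_1$ and $c_{n-1}$ respectively, and can therefore be peeled off by repeated use of \cite[Prop.~17.1]{bgm}; this reduces the problem to a residue supported on the middle fundamentals $\omega_2,\dots,\omega_{n-2}$. If this residue contains two consecutive fundamentals in its support Lemma~\ref{bgmineq} extracts a summand $a\omega_j+b\omega_{j+1}$ directly, and if it is a single multiple $c\omega_j$ I iterate the splitting $c\omega_j=(c-1)\omega_j+\omega_j$, contributing at each stage a factor $\omega_j$ at level~$2$ of rank $>1$ by Lemma~\ref{tinyrank}. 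The main obstacle will be the combinatorial bookkeeping to guarantee that the residue produced at each step genuinely lies in $\Lambda$---so has rank exactly~$1$---rather than an unknown weight of potentially zero rank; I plan to manage this by ordering the peelings carefully, invoking duality to place the remaining support into the admissible range $2\le j\le n-2$, and arranging each residue to be of the form $(\ell'-m)\omega_j+m\omega_{j+1}$ already on the list $\Lambda$.
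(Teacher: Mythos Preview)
Your toolkit is exactly the paper's—Lemma~\ref{technical}, level monotonicity, the rank-one factorization surjection of \cite[Prop.~17.1]{bgm}, and Lemmas~\ref{tinyrank}/\ref{bgmineq}—but the specific plan you outline has a genuine gap.

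Consider $\mu=c_1\omega_1+c_{n-1}\omega_{n-1}$ with $c_1,c_{n-1}>0$ and $n\ge 4$. Here $\mu^{(1)}=c_1+c_{n-1}<\ell$, so your initial application of Lemma~\ref{technical} does nothing. You then peel off $c_1\omega_1$ at level $c_1$ and $c_{n-1}\omega_{n-1}$ at level $c_{n-1}$; the residue is the zero weight, and the surjection yields only $\operatorname{rk}\ge 1$, not $>1$. Duality sends this $\mu$ to the same shape, so that does not help either. More generally, your dichotomy ``residue contains two consecutive fundamentals'' versus ``residue is a single multiple'' omits the case of several non-adjacent fundamentals (e.g.\ $c_2\omega_2+c_4\omega_4$), and even in the consecutive case you still have to peel down to exactly $a\omega_j+b\omega_{j+1}$ at level $a+b+1$ before Lemma~\ref{bgmineq} applies; you have not said how.

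The fix is the one your opening sentence hints at but your plan does not implement: genuinely \emph{iterate} level reduction and Lemma~\ref{technical}, rather than strip once and then peel. Lowering $\ell$ to $\mu^{(1)}$ makes the top block of rows have length $\ell$, and stripping them \emph{shifts the indices} of the remaining fundamentals (e.g.\ $c_1\omega_1+c_{n-1}\omega_{n-1}$ at level $c_1+c_{n-1}$ strips to $c_{n-1}\omega_{n-2}$). Repeating this reduces the number of fundamentals in the support by one at each pass, and pushes the surviving indices into the interior range where Lemmas~\ref{tinyrank} and~\ref{bgmineq} apply. This is precisely how the paper organizes the argument: it treats the single-fundamental, two-fundamental, and $k\ge 3$-fundamental cases separately, using level reduction plus Lemma~\ref{technical} to descend from each case to the previous one, terminating at Lemma~\ref{bgmineq}. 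Your peeling of $c_1\omega_1$ and $c_{n-1}\omega_{n-1}$ is unnecessary once you iterate, and it is exactly what breaks on the example above.
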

\begin{proof} Throughout the proof we will use the following classical formula (see for example~\cite[Lemma 1.8]{bgm}). For all  $c>0$, we have
\begin{equation}\label{ranklemma}
\op{rk}\mathbb{V}({\sL_{n}, \mu^n,\ell})\leq  \op{rk}\mathbb{V}({\sL_{n}, \mu^n,\ell+c}).
\end{equation}

\medskip

\begin{enumerate}

\item
First, suppose that $\mu = b\omega_i$. Then since $\mu\not\in \Lambda$, we have  $1<i<n-1$, and $b<\ell$. In this case,  
$\op{rk}\mathbb{V}({\sL_{n}, \mu^n,\ell})\geq  \op{rk}\mathbb{V}({\sL_{n}, (b\omega_i)^n, b+1})$ by~(\ref{ranklemma}). We have  $\op{rk}\mathbb{V}({\sL_{n}, (b\omega_i)^n, b+1})>1$ by Lemma~\ref{bgmineq}.

\item
Second, suppose that $\mu = a\omega_i + b\omega_j$, where $i<j$,  $a, b\neq 0$ and $ \ell \geq (a+b)$. 

\begin{enumerate}
\item
If $\ell = a+b$, then since $\mu \not\in \Lambda$, we have $j\geq  i+2$. By Theorem~\ref{technical}, $\op{rk}\mathbb{V}({\sL_{n}, \mu^n,\ell})=  \op{rk}\mathbb{V}({\sL_{n}, (b\omega_j)^n, a+b})$. Now we reduced the problem to the case (1).

\item
If $\ell > a+b$, and $j > i+1$, we can apply~(\ref{ranklemma}) to conclude that 
$\op{rk}\mathbb{V}({\sL_{n}, \mu^n,\ell})\geq  \op{rk}\mathbb{V}({\sL_{n}, (a\omega_i+ b\omega_j)^n, a+b})$, so we reduced to the case (2i).

\item If $\ell > a+b$, and $j=i+1$, we can apply~(\ref{ranklemma}) to conclude that 
$\op{rk}\mathbb{V}({\sL_{n}, \mu^n,\ell})\geq  \op{rk}\mathbb{V}({\sL_{n}, (a\omega_i+ b\omega_{i+1})^n, a+b+1})$. We have $\op{rk}\mathbb{V}({\sL_{n}, (a\omega_i+ b\omega_{i+1})^n,\! a\!+\!b\!+\!1})>1$ by Lemma~\ref{bgmineq}.

\end{enumerate}

\item Finally,  let $i_1<\dots< i_k$ be an ordered subset of  $\{1, \dots, n-1\}$, and let $\mu = \sum_{j=1}^k c_{i_j} \omega_{i_j}$ with all $c_{i_j}>0$, and $\ell(\mu)=\sum_{j=1}^k c_{i_j} \leq  \ell$, and $k\geq 3$.   By~(\ref{ranklemma})  we have
\begin{equation*}\label{lowerlevel}
\op{rk}\mathbb{V}({\sL_{n}, \mu^n,\ell})\geq  \op{rk}\mathbb{V}({\sL_{n}, \mu^n,\ell(\mu)}).
\end{equation*} 
Using Proposition~\ref{p:equalranks}, we obtain that
\begin{equation*}
\op{rk}\mathbb{V}({\sL_{n},\mu^n,\ell(\mu)} )= \op{rk}\mathbb{V}({\sL_{n}, (\sum_{j=2}^{k}c_{i_j} \omega_{i_j})^n,\ell(\mu)}), 
\end{equation*}
so that
\begin{equation*}
\op{rk}\mathbb{V}({\sL_{n},\mu^n,\ell} )\geq  \op{rk}\mathbb{V}({\sL_{n}, (\sum_{j=2}^{k}c_{i_j} \omega_{i_j})^n,\ell(\mu)}). 
\end{equation*}
We can repeat this process $k-2$ times to conclude that
\begin{equation*}
\op{rk}\mathbb{V}({\sL_{n}, \mu^n,\ell}) \geq  \op{rk}\mathbb{V}({\sL_{n}, (c_{i_{k-1}} \omega_{i_{k-1}} + c_{i_k} \omega_{i_k} )^n,\ c_{i_{k-2}}+c_{i_{k-1}}+ c_{i_{k}}}).
\end{equation*}
Thus we reduced to the case (2ii) if $i_k>i_{k-1}+1$, and the case (2iii) if $i_k = i_{k-1}+1$.

\end{enumerate}

\end{proof}

\begin{example}In this example we elaborate on the computation in the proof of Lemma~\ref{tinyrank} to show  that $\op{rk}\mathbb{V}({\sL_7, \omega_3^7,2})\geq 2.$
Using Witten's dictionary, we need to compute the coefficient of $q \sigma_{(2,2,2,2,2,2,2)}$ in the quantum  product $\sigma_{\omega_3}^{\star 7}\star \sigma_{2\omega_1}$.

We see that $\alpha = 3$, and 

{\Yvcentermath1
$$\sigma_{\omega_3}^{\star 3} = 3\ \ {\tiny \yng(2,2,1,1,1,1,1)} + \mbox{ other effective terms}.$$}

Then by Pieri rule, $\sigma_{\omega_3}^{\star 3}\star \sigma_{2\omega_1} = 3q \ \ {\tiny \yng(1,1)}$ + $\mbox{ other effective  terms}.$
{\Yvcentermath1
 $\mbox {Finally, \ \ }\sigma_{\omega_3}^{\star 7}\star \sigma_{2\omega_1} = 3q \ \  {\tiny \yng(2,2,2,2,2,2,2)}+ \mbox{ other effective  terms}.$}
In particular, $\op{rk}\mathbb{V}({\sL_7, \omega_3^7,2})\geq 3.$

\end{example}

\section{Decomposition of $\mathbb D({\sL_{n},((\ell-m)\omega_i+m\omega_{i+1})^n,\ell})$}

In this section we identify all $\mathbb D \in \mathcal S$ as effective sums of level one divisors, and from this conclude that the cone $\mathcal S$ is finitely generated.

\begin{proof}[Proof of Proposition~\ref{Decomposition}]
By Proposition~\ref{allrank1}, we have $\op{rk}\mathbb{V}({\sL_n, (\ell-m)\omega_i+m\omega_{i+1},\ell})=1$. 
Moreover, since we know that the level one bundles 
$\mathbb{V}({\sL_n, \omega_j^n,1})$ have rank one, and so by Belkale's quantum generalization of Fulton's conjecture~\cite{Bel07}, $\op{rk}\mathbb{V}({\sL_n, (N\omega_j)^n,N})=1$ for all integers $N$.
We have that  $\op{rk}\mathbb{V}({\sL_n,( (\ell-m)\omega_i)^n,(\ell-m)})=1$ and $\op{rk}\mathbb{V}({\sL_n, (m\omega_{i+1})^n,m})=1$.  
So by applying \cite[Prop. 17.1]{bgm} and \cite[Cor. 17.3]{bgm}, we conclude that 
\begin{equation*}
\begin{aligned}
\mathbb{D}({\sL_{n},((\ell-m)\omega_i+m\omega_{i+1})^n,\ell})&=\mathbb{D}({\sL_{n},(\ell-m)\omega_i^n,\ell-m})+  \mathbb{D}({\sL_{n},m\omega_{i+1}^n,m})\\
&=(\ell-m)\mathbb{D}({\sL_{n},\omega_i^n,1})+ m \mathbb{D}({\sL_{n},\omega_{i+1}^n,1}).
\end{aligned}
\end{equation*}
\end{proof}

\begin{remark}\label{rkdim}
 Without loss of generality assume that $m>0$.
Using Macaulay2, we checked that up to $n=2000$, each family $$\mathcal{F}^n_{\ell,m}=\{\mathbb{D}(\sL_{n},((\ell-m)\omega_i+m\omega_{i+1})^n,\ell): 1 \le i \le \lfloor n/2\rfloor -1\}$$ gives a basis of $\operatorname{Pic}(\ovop{M}_{0,n})^{\operatorname{S}_n}$ by intersecting divisors from the family $\mathcal{F}^n_{\ell,m}$ with the independent set  $\{F_{1,1,i, n-i-2}\}_{i=1}^{\lfloor n/2\rfloor -1}$ of F--curves on $(\ovop{M}_{0,n})^{\operatorname{S}_n}$ using~\cite[Proposition 5.2]{Fakh}.
\end{remark}

We checked the full dimensionality of $\mathcal S$ for all $n\leq 2000$, and sporadically for some larger $n$, and we  believe that the statement of Remark~\ref{rkdim} is true for all $n$.

So at least up to $n = 2000$, the cone generated by $\mathcal{S}$ is full dimensional as it contains all the full dimensional cones generated by the  $\mathcal{F}^n_{\ell,m}$.   By Proposition \ref{Decomposition}, each divisor $\mathbb D\in \mathcal S$ is a linear combination of $\mathbb{D}({\sL_{n},\omega_{i}^n,1})$, which for $2 \le i \le \lfloor n/2 \rfloor$, by \cite{AGSS} define extremal rays of the $\operatorname{S}_n$-invariant nef cone $\operatorname{Nef}(\ovop{M}_{0,n})^{\operatorname{S}_n}$.  So the cone generated by $\mathcal{S}$ is equal to the cone spanned by these rays.

\begin{bibdiv}
\begin{biblist}

\bib{AGS}{article}{
  author={Alexeev, Valery},
  author={Gibney, Angela},
  author={Swinarski, David},
  title={Higher level conformal blocks on $\overline {\operatorname {M}}_{0,n}$ from $\sL _2$},
  journal={Proc. Edinb. Math. Soc.,},
   volume={57},
  year={2014},
  pages={7-30},
 
}

\bib{AGSS}{article}{
  author={Arap, Maxim},
  author={Gibney, Angela},
  author={Stankewicz, Jim},
  author={Swinarski, David},
  title={$\sL _n$ level $1$ Conformal blocks divisors on $\overline {\operatorname {M}}_{0,n}$},
  journal={International Math Research Notices},
  date={2011},
}

\bib{Bel07}{article}
{
    AUTHOR = {Belkale, Prakash},
     TITLE = {Geometric proof of a conjecture of {F}ulton},
   JOURNAL = {Adv. Math.},
  FJOURNAL = {Advances in Mathematics},
    VOLUME = {216},
      YEAR = {2007},
    NUMBER = {1},
     PAGES = {346--357},
    }

\bib{b4}{article}{
    AUTHOR = {Belkale, Prakash},
     TITLE = {Quantum generalization of the {H}orn conjecture},
   JOURNAL = {J. Amer. Math. Soc.},
  FJOURNAL = {Journal of the American Mathematical Society},
    VOLUME = {21},
      YEAR = {2008},
    NUMBER = {2},
     PAGES = {365--408},
}

\bib{bgm}{article}{
   author={Belkale, Prakash},
   author={Gibney, Angela},
   author={Mukhopadhyay, Swarnava},
   title={Quantum cohomology and conformal blocks on $\overline M_{0,n}$},
   date={2013},
    note={arXiv:1308.4906v3 [math.AG]}
}

\bib{Bertram}{article}{
author = {Bertram, Aaron},
    title = {Quantum Schubert Calculus},
    journal = {Adv. Math},
    year = {1997},
    volume = {128},
    pages = {289--305}
    }

\bib{BG12}{article}{
  author={Bolognesi, Michelle},
 author={Giansiracusa, Noah},
  title={Factorization of point configurations, cyclic covers, and conformal blocks},
 journal={J. Eur. Math. Soc, to appear,},
  date={2012},
  note={arXiv:1208.4019 [math.AG]},
}

\bib{Fakh}{article}{
   author={Fakhruddin, Najmuddin},
   title={Chern classes of conformal blocks},
   conference={
      title={Compact moduli spaces and vector bundles},
   },
   book={
      series={Contemp. Math.},
      volume={564},
      publisher={Amer. Math. Soc.},
      place={Providence, RI},
   },
   date={2012},
   pages={145--176},
}

\bib{Fed11}{article}{
  author={Fedorchuk, Maksym},
  title={Cyclic Covering Morphisms on $\bar {M}_{0,n}$},
  date={2011},
  note={arXiv:1105.0655 [math.AG]},
}

\bib{Fed13}{article}{
  author={Fedorchuk, Maksym},
  title={New nef divisors on $\overline{M}_{0,n}$},
  date={2013},
  note={arXiv:1308.5993 [math.AG]},
}

\bib{Gia11}{article}{
  author={Giansiracusa, Noah},
  title={Conformal blocks and rational normal curves},
  journal={Journal of Algebraic Geometry,},
  year={2013},
  volume={22},
  pages={773--793},
}

\bib{GG12}{article}{
  author={Giansiracusa, Noah},
  author={Gibney, Angela},
  title={The cone of type A, level 1 conformal block divisors},
  journal={Adv. Math.},
  volume={231},
  pages={798--814},
  year={2012},
}

\bib{GJMS12}{article}{
  author={Gibney, Angela},
  author={Jensen, David},
  author={Moon, Han-Bom},
  author={Swinarski, David},
  title={Veronese quotient models of $\overline{\operatorname{M}}_{0,n}$ and conformal blocks},
  journal={Michigan Math Journal},
  volume={62},
  pages={721--751},
  year={2013},
}

\bib{ConfBlocks}{article}{
		author={Swinarski, David},
		title={\texttt{\upshape ConformalBlocks}: a Macaulay2 package for computing conformal block divisors},
		date={2010},
		note={Version 1.1, {http://www.math.uiuc.edu/Macaulay2/}},
}

\bib{Swi11}{article}{
		author={Swinarski, David},
		title={$sl_2$ conformal block divisors and the nef cone of $\bar{M}_{0,n}$},
		date={2011},
		journal={Exp. Math, to appear,},
		note={arXiv:1107.5331 [math.AG]},
}
\bib{TUY}{article} {
    AUTHOR = {Tsuchiya, Akihiro},
    AUTHOR = {Ueno, Kenji},
    AUTHOR = {Yamada, Yasuhiko},
     TITLE = {Conformal field theory on universal family of stable curves
              with gauge symmetries},
 BOOKTITLE = {Integrable systems in quantum field theory and statistical
              mechanics},
    SERIES = {Adv. Stud. Pure Math.},
    VOLUME = {19},
     PAGES = {459--566},
 PUBLISHER = {Academic Press},
   ADDRESS = {Boston, MA},
      YEAR = {1989},  
}
\end{biblist}
\end{bibdiv}

\end{document}